\title{Improved K\"unneth Tricks}
\author{Amnon Yekutieli}
\address{Department of Mathematics,
Ben Gurion University, Be'er Sheva 84105, Israel.
\newline \indent \textup{\textit{Email}:
\href{mailto:amyekut@gmail.com}{\rm \scriptsize
\nolinkurl{amyekut@gmail.com}},
\textit{Web}: \rm \scriptsize
\url{https://sites.google.com/view/amyekut-math/home}}}
\date{2 August 2023}
\thanks{{\em Mathematics Subject Classification} 2010.
Primary: 16E45. Secondary: 16E35, 18G10, 18E30.}
\keywords{DG rings, DG modules, derived categories, derived functors.}
\newtheorem{thm}[equation]{Theorem}
\theoremstyle{definition}
\newcommand{\iso}{\xrightarrow{
\smash{\raisebox{-0.5ex}{\ensuremath{\scriptstyle \simeq  \mspace{2mu}}}}}}
\newcommand{\xar}{\xrightarrow}
\newcommand{\opn}{\operatorname}
\newcommand{\cat}[1]{\operatorname{\mathsf{#1}}}
\newcommand{\cd}{\mspace{1.8mu}{\cdotB}\mspace{2.0mu}}
\newcommand{\rmitem}[1]{\item[\textrm{(#1)}]}
\newcommand{\mrm}[1]{\mathrm{#1}}
\renewcommand{\th}{\theta}
\newcommand{\K}{\mathbb{K}}
\newcommand{\Z}{\mathbb{Z}}
\newcommand{\ot}{\otimes}
\renewcommand{\d}{\mathrm{d}}
\newcommand{\msp}[1]{\mspace{#1 mu}}
\begin{document}

\begin{abstract}
The K\"unneth trick is a formula for the top cohomology of the derived tensor
product of two complexes of modules over a ring.

In this note we present two improvements of this formula. The first
improved K\"unneth trick is a formula for the top cohomology
of the plain tensor product of two DG modules over a nonpositive DG ring.
The second trick handles the derived tensor product of two DG
modules over a nonpositive DG ring. The proofs are elementary.
\end{abstract}

\maketitle


The {\em Ku\"nneth trick} is a useful formula for the top cohomology of the
derived tensor product $M \ot^{\mrm{L}}_A N$ of complexes of modules over a
ring $A$. It was stated without a proof as Lemma 13.1.36 in our book \cite{Ye2},
and earlier we stated it as Lemma 2.1 in the paper \cite{Ye1},
with an indirect proof relying on the convergent K\"unneth spectral sequence.

In this short paper we have two improved Ku\"nneth tricks. Theorem
\ref{thm:100} gives a formula for the top cohomology of
$M \ot_A N$, where $A$ is a {\em nonpositive DG ring}, $M$ is a bounded above
{\em right DG $A$-module}, and {\em $N$ is a bounded above left DG $A$-module}.
Theorem \ref{thm:101} does the same for the
derived tensor product $M \ot^{\mrm{L}}_A N$.

In this paper we follow the definitions and notation of the
book \cite{Ye2}. Chapter 3 of this book has a detailed treatment of DG
rings and DG modules. Chapter 7 of the book is about derived categories of DG
modules, Chapter 8 talks about derived functors, and Section 12.3 deals with
$M \ot^{\mrm{L}}_A N$.
In the paragraphs below we provide a very brief recollection of this material.

We fix a commutative base ring $\K$ (possibly $\K = \Z$), and a
{\em nonpositive central DG $\K$-ring}
$A = \bigoplus_{i \leq 0} A^i$ (possibly noncommutative).
We let $\bar{A} := \opn{H}^0(A)$, which is a central $\K$-ring. There is
a canonical DG $\K$-ring homomorphism $A \to \bar{A}$.

Here are some useful special cases.
One is when $A$ is a {\em central $\K$-ring}, i.e.\ $A = A^0$; in traditional
language such $A$ is called a unital associative $\K$-algebra.
Another special case is when $A$ is a {\em weakly commutative DG ring}
(see \cite[Definition 3.3.4]{Ye2}), and in this case we can take $\K = A^0$.
The conjunction of these two cases is when $A = A^0$ is a commutative ring.

All DG $A$-modules are left DG modules by default, and there are no boundedness
conditions, except when specified.
The DG category of (left) DG $A$-modules is denoted by $\cat{C}_{}(A)$.
The strict subcategory $\cat{C}_{\mrm{str}}(A)$ has the same objects as
$\cat{C}_{}(A)$, but its morphisms are the
strict homomorphisms, namely the degree $0$ homomorphisms that commute with the
differentials, and it is a $\K$-linear abelian category.
Observe that for $M \in \cat{C}_{}(A)$, its cohomologies
$\opn{H}^i(M)$ are $\bar{A}$-modules.

Given an integer $i_0$, we let
$\cat{C}^{\leq i_0}_{\mrm{str}}(A)$ be
the full subcategory of $\cat{C}_{}(A)$ on the DG modules $M$ that are
concentrated in degrees $\leq i_0$, i.e.\
$M = \bigoplus_{i \leq i_0} M^i$.
We indentify the cateogy $\cat{M}(A)$ of $A$-modules with the full subcategory
of $\cat{C}_{}(A)$ on the DG modules $M$ that are concentrated in degree
$0$.

The derived category of DG $A$-modules is $\cat{D}(A)$. It is a $\K$-linear
triangulated category.
For an integer $i_0$, we let
$\cat{D}^{\leq i_0}(A)$ be
the full subcategory on the complexes $M$ whose {\em cohomology} is
concentrated in degrees $\leq i_0$. Of course $\cat{D}^{\leq i_0}(A)$
is not a triangulated subcategory of $\cat{D}(A)$.

Right DG $A$-modules are treated a left DG modules over the opposite DG ring
$A^{\mrm{op}}$. These are the objects of the DG category
$\cat{C}_{}(A^{\mrm{op}})$ and the triangulated category
$\cat{D}_{}(A^{\mrm{op}})$.

Given DG modules
$M \in \cat{C}^{\leq i_0}_{}(A^{\mrm{op}})$
and
$N \in \cat{C}^{\leq j_0}_{}(A)$,
their tensor product $M \ot_A N$ belongs to
$\cat{C}^{\leq i_0 + j_0}_{}(\K)$.
Similarly, given DG modules
$M \in \cat{D}^{\leq i_0}(A^{\mrm{op}})$
and
$N \in \cat{C}^{\leq j_0}(A)$,
their derived tensor product $M \ot^{\mrm{L}}_A N$ belongs to
$\cat{D}^{\leq i_0 + j_0}(\K)$.

For a DG module $N \in \cat{C}^{\leq j_0}_{}(A)$,
the elements of $N^{j_0}$ are all cocycles.
Given an element $n \in N^{j_0}$, we denote by $[n]$ the cohomology class of
$n$ in $\opn{H}^{j_0}(N)$.
Similarly for $M \in \cat{C}^{\leq i_0}_{}(A^{\mrm{op}})$ and
$m \in M^{i_0}$.

\begin{thm}[Plain K\"unneth Trick] \label{thm:100}
Let $A$ be a nonpositive central DG $\K$-ring, and let
$M \in \cat{C}^{\leq i_0}_{}(A^{\mrm{op}})$
and
$N \in \cat{C}^{\leq j_0}_{}(A)$
for some integers $i_0$ and $j_0$.
Then there is a unique isomorphism of $\K$-modules
\[ \th_{M, N} : \opn{H}^{i_0}(M) \ot_{\bar{A}}
\opn{H}^{j_0}(N) \iso
\opn{H}^{i_0 + j_0}(M \ot_{A} N) \]
such that
$\th_{M, N} ([m] \ot [n]) =  [m \ot n ]$
for all $m \in M^{i_0}$  and $n \in N^{j_0}$.
The isomorphism $\th_{M, N}$ is functorial in $M$ and $N$.
\end{thm}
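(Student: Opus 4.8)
The plan is to push the entire computation into the top graded degree $i_0 + j_0$, where the nonpositivity of $A$ collapses the DG tensor product into an ordinary tensor product of modules over the ring $A^0$, and then to finish by right exactness of $\ot$. Uniqueness of $\th_{M, N}$ is immediate, since the elements $[m] \ot [n]$ with $m \in M^{i_0}$ and $n \in N^{j_0}$ generate $\opn{H}^{i_0}(M) \ot_{\bar{A}} \opn{H}^{j_0}(N)$ as a $\K$-module, so any $\K$-linear map is determined by its values on them. For the same reason, once existence and the formula are in hand, functoriality is automatic: for strict homomorphisms $f \colon M \to M'$ and $g \colon N \to N'$ the relevant naturality square commutes on generators, both composites sending $[m] \ot [n]$ to $[f(m) \ot g(n)]$. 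Thus the real content is constructing $\th_{M, N}$ and showing it is bijective.

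First I would determine the graded module $(M \ot_A N)^{i_0 + j_0}$. Writing a homogeneous generator of the defining relations as $m a \ot n - m \ot a n$ with $m \in M^p$, $a \in A^r$, $n \in N^q$, the constraints $p \leq i_0$, $q \leq j_0$, $r \leq 0$ force $p + r + q \leq i_0 + j_0$, with equality exactly when $p = i_0$, $r = 0$, $q = j_0$. Hence in degree $i_0 + j_0$ the only surviving relations are those with $a \in A^0$, and since $(M \ot_\K N)^{i_0 + j_0} = M^{i_0} \ot_\K N^{j_0}$ (all other bidegrees vanishing), I obtain a canonical isomorphism $(M \ot_A N)^{i_0 + j_0} \cong M^{i_0} \ot_{A^0} N^{j_0}$.

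Next I would compute the coboundaries inside this module. An element of degree $i_0 + j_0 - 1$ is a sum of tensors in bidegrees $(i_0, j_0 - 1)$ and $(i_0 - 1, j_0)$, and because $\d_M(M^{i_0}) \sub M^{i_0 + 1} = 0$ and $\d_N(N^{j_0}) \sub N^{j_0 + 1} = 0$, the Leibniz rule shows that the image of $\d$ in $M^{i_0} \ot_{A^0} N^{j_0}$ is exactly the $\K$-submodule generated by the images of $B_M \ot_{A^0} N^{j_0}$ and of $M^{i_0} \ot_{A^0} B_N$, where $B_M := \opn{im}(\d_M \colon M^{i_0 - 1} \to M^{i_0})$ and $B_N := \opn{im}(\d_N \colon N^{j_0 - 1} \to N^{j_0})$. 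By right exactness of $\ot_{A^0}$, the quotient of $M^{i_0} \ot_{A^0} N^{j_0}$ by this submodule is canonically $(M^{i_0} / B_M) \ot_{A^0} (N^{j_0} / B_N) = \opn{H}^{i_0}(M) \ot_{A^0} \opn{H}^{j_0}(N)$, and tracing the identifications shows that the class of $m \ot n$ corresponds to $[m] \ot [n]$.

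It then remains to replace $A^0$ by $\bar{A} = \opn{H}^0(A) = A^0 / \opn{im}(\d_A \colon A^{-1} \to A^0)$. For $a = \d_A(a')$ with $a' \in A^{-1}$ and $m \in M^{i_0}$, the Leibniz rule gives $m \cdot a = \pm \, \d_M(m \cdot a') \in B_M$, so the ideal $\opn{im}(\d_A)$ annihilates $\opn{H}^{i_0}(M)$, and similarly for $N$. Hence both cohomologies are $\bar{A}$-modules and the $A^0$-tensor product coincides with the $\bar{A}$-tensor product, which yields the desired $\th_{M, N}$. The only step requiring genuine care is the bookkeeping in the extreme degree—pinning down precisely which relations and which coboundaries survive—after which right exactness does the rest.
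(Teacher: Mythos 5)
Your proof is correct and follows essentially the same route as the paper's: identify $(M \ot_A N)^{i_0+j_0}$ with $M^{i_0} \ot_{A^0} N^{j_0}$ by degree considerations, recognize the coboundaries as the images of $B_M \ot_{A^0} N^{j_0}$ and $M^{i_0} \ot_{A^0} B_N$, and conclude by right exactness of the tensor product before descending from $A^0$ to $\bar{A}$. The only differences are cosmetic (you skip the initial translation to $i_0 = j_0 = 0$ and you spell out why $\opn{im}(\d_A)$ annihilates the top cohomologies, which the paper leaves implicit).
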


\begin{proof}
After translating the complexes $M$ and $N$, we may assume that
$i_0 = j_0 = 0$.

There is an exact sequence of $\K$-modules
\[ (M \ot_{A} N)^{-1} \xar{\msp{5} \d \msp{5}} (M \ot_{A} N)^{0}
\xar{\msp{5} \pi \msp{5}} \opn{H}^{0}(M \ot_{A} N) \to 0 , \]
where $\d$ is the differential of the DG $\K$-module
$M \ot_A N$, and
$\pi(m \ot n) := [m \ot n]$.
By degree considerations, it is not hard to see that the obvious homomorphism
$M^0 \ot_{A^0} N^0 \to (M \ot_{A} N)^{0}$ is bijective, and the homomorphism
\[ (M^{-1} \ot_{A^0} N^{0}) \oplus (M^{0} \ot_{A^0} N^{-1}) \to
(M \ot_{A} N)^{-1} \]
is surjective. (The second homomorphism might fail to be injective: for
elements $a \in A^{-1}$, $m \in M^{0}$ and $n \in N^{0}$,
the element $(m \cd a) \ot n - m \ot (a \cd n)$
could be nonzero in the source, but it is always zero in the target.)
Therefore we get an exact sequence of $\K$-modules
\begin{equation} \label{eqn:105}
(M^{-1} \ot_{A^0} N^{0}) \oplus (M^{0} \ot_{A^0} N^{-1})
\xar{\msp{5} \phi \msp{5}}
M^{0} \ot_{A^0} N^{0}
\xar{\msp{5} \pi \msp{5}}
\opn{H}^{0}(M \ot_{A} N) \to 0 ,
\end{equation}
where
$\phi := (\d_M \ot \opn{id}_N) \oplus (\opn{id}_M \ot \msp{2} \d_N)$.

We have an exact sequence of $A^0$-modules
\[  N^{-1} \to N^{0} \xar{\msp{5} \pi_N \msp{5}} \opn{H}^{0}(N) \to 0 . \]
Applying $\opn{H}^{0}(M) \ot_{A^0} (-)$ to it gives the exact sequence of
$\K$-modules
\begin{equation} \label{eqn:109}
\opn{H}^{0}(M) \ot_{A^0} N^{-1} \to \opn{H}^{0}(M) \ot_{A^0} N^{0} \to
\opn{H}^{0}(M) \ot_{A^0} \opn{H}^{0}(N) \to 0 .
\end{equation}
The surjection $\pi_M : M^0 \to \opn{H}^{0}(M)$ allows us to replace
(\ref{eqn:109})
with this exact sequence:
\begin{equation} \label{eqn:130}
M^0 \ot_{A^0} N^{-1} \to \opn{H}^{0}(M) \ot_{A^0} N^{0} \to
\opn{H}^{0}(M) \ot_{A^0} \opn{H}^{0}(N) \to 0 .
\end{equation}

Next there is an exact sequence of $(A^0)^{\mrm{op}}$-modules
\[  M^{-1} \to M^{0} \xar{\msp{5} \pi_M \msp{5}} \opn{H}^{0}(M) \to 0 . \]
Applying $(-) \ot_{A^0} N^0$ to it gives the exact sequence of
$\K$-modules
\begin{equation} \label{eqn:131}
M^{-1} \ot_{A^0} N^0 \to M^{0}\ot_{A^0} N^0 \to
\opn{H}^{0}(M)\ot_{A^0} N^0 \to 0 .
\end{equation}
By replacing the term
$\opn{H}^{0}(M)\ot_{A^0} N^0$ in (\ref{eqn:130}) with
$M^0 \ot_{A^0} N^0$, and using the exact sequence (\ref{eqn:131}),
we obtain this exact sequence:
\begin{equation} \label{eqn:133}
(M^{-1} \ot_{A^0} N^{0}) \oplus (M^{0} \ot_{A^0} N^{-1})
\xar{\msp{3} \phi \msp{3}}
M^{0} \ot_{A^0} N^{0}
\xar{\msp{3} \pi_M \ot_{A^0} \pi_N \msp{3}}
\opn{H}^{0}(M) \ot_{A^0} \opn{H}^{0}(N) \to 0 ,
\end{equation}
where $\pi_M$, $\pi_N$ and $\phi$ are as above.

By comparing the exact sequences (\ref{eqn:105}) and
(\ref{eqn:133}) we obtain an isomorphism
\[ \opn{H}^{0}(M) \ot_{A^0} \opn{H}^{0}(N) \iso
\opn{H}^{0}(M \ot_{A} N) \msp{2} , \msp{5}
[m] \ot [n] \mapsto [m \ot n ] . \]
Composing this with the canonical isomorphism
$\opn{H}^{0}(M) \ot_{\bar{A}} \opn{H}^{0}(N) \iso
\opn{H}^{0}(M) \ot_{A^0} \opn{H}^{0}(N)$
yields the desired isomorphism $\th_{M, N}$.
The uniqueness and functoriality are clear.
\end{proof}

Recall that for
$M \in \cat{D}(A^{\mrm{op}})$ and
$N \in \cat{D}(A)$
there is the bifunctorial morphism
\[ \eta^{\mrm{L}}_{M, N} : M \ot^{\mrm{L}}_A N \to M \ot_A N \]
in $\cat{D}(\K)$, which is part of the left derived functor.

\begin{thm}[Derived Ku\"nneth Trick] \label{thm:101}
Let $A$ be a nonpositive central DG $\K$-ring,
and let
$M \in \cat{D}^{\leq i_0}_{}(A^{\mrm{op}})$ and
$N \in \cat{D}^{\leq j_0}_{}(A)$
for some integers $i_0$ and $j_0$.
Then there is a unique isomorphism of $\K$-modules
\[ \th^{\mrm{der}}_{M, N} :
\opn{H}^{i_0}(M) \ot_{\bar{A}} \opn{H}^{j_0}(N) \iso
\opn{H}^{i_0 + j_0}(M \ot^{\mrm{L}}_A N) \]
with the following two properties:
\begin{itemize}
\rmitem{i} $\th^{\mrm{der}}_{M, N}$ is functorial in $M$ and $N$.

\rmitem{ii} If $M \in \cat{C}^{\leq i_0}_{}(A^{\mrm{op}})$
and $N \in \cat{C}^{\leq j_0}_{}(A)$, then the diagram
\[ \begin{tikzcd} [column sep = 8ex, row sep = 6ex]
\opn{H}^{i_0}(M) \ot_{\bar{A}} \opn{H}^{j_0}(N)
\ar[r, "{\th^{\mrm{der}}_{M, N}}"]
\ar[dr, "{\th^{}_{M, N}}"']
&
\opn{H}^{i_0 + j_0}(M \ot^{\mrm{L}}_A N)
\ar[d, "{\opn{H}^{i_{0} + j_0}(\eta^{\mrm{L}}_{M, N})}"]
\\
&
\opn{H}^{i_0 + j_0}(M \ot^{}_A N)
\end{tikzcd} \]
is commutative.
\end{itemize}
\end{thm}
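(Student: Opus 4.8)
The plan is to reduce the derived statement to the Plain K\"unneth Trick (Theorem \ref{thm:100}) by computing $M \ot^{\mrm{L}}_A N$ through resolutions that stay in the same range of degrees, and then to transport the resulting isomorphism along quasi-isomorphisms. The decisive input, which uses the nonpositivity of $A$, is that every $M \in \cat{D}^{\leq i_0}(A^{\mrm{op}})$ admits a K-flat (indeed semifree) resolution $\rho : P \iso M$ with $P$ genuinely concentrated in degrees $\leq i_0$, i.e.\ $P \in \cat{C}^{\leq i_0}_{}(A^{\mrm{op}})$; this is standard for nonpositive DG rings (cf.\ \cite{Ye2}). For $N$ no flatness is needed: again by nonpositivity the smart truncation $Q := \tau^{\leq j_0}(N)$ is a DG $A$-submodule of $N$ lying in $\cat{C}^{\leq j_0}_{}(A)$, and the inclusion $Q \iso N$ is a quasi-isomorphism.

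To build $\th^{\mrm{der}}_{M,N}$, fix such $P$ and $Q$. K-flatness of $P$ yields canonical isomorphisms $M \ot^{\mrm{L}}_A N \cong P \ot_A N$ and $P \ot_A Q \iso P \ot_A N$ in $\cat{D}(\K)$, hence $\opn{H}^{i_0+j_0}(P \ot_A Q) \cong \opn{H}^{i_0+j_0}(M \ot^{\mrm{L}}_A N)$. Theorem \ref{thm:100} applied to $P$ and $Q$ gives $\th_{P,Q} : \opn{H}^{i_0}(P) \ot_{\bar{A}} \opn{H}^{j_0}(Q) \iso \opn{H}^{i_0+j_0}(P \ot_A Q)$. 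I would then define $\th^{\mrm{der}}_{M,N}$ as the composite of $\th_{P,Q}$ with the isomorphisms $\opn{H}^{i_0}(M) \cong \opn{H}^{i_0}(P)$ and $\opn{H}^{j_0}(N) \cong \opn{H}^{j_0}(Q)$ induced by $\rho$ and the inclusion, and with the identification above. Independence of the chosen $P$ and $Q$, as well as functoriality in $M$ and $N$ (property (i)), would follow from the functoriality clause of Theorem \ref{thm:100}, using that any two such resolutions are linked by a quasi-isomorphism over $M$ (unique up to homotopy) and that morphisms of $\cat{D}$ lift to the resolutions.

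For property (ii) and uniqueness, take $M \in \cat{C}^{\leq i_0}_{}(A^{\mrm{op}})$ and $N \in \cat{C}^{\leq j_0}_{}(A)$ strict, with $Q = N$. Under $M \ot^{\mrm{L}}_A N \cong P \ot_A N$ the morphism $\eta^{\mrm{L}}_{M,N}$ is represented by $\rho \ot \opn{id}_N : P \ot_A N \to M \ot_A N$, so the functoriality of Theorem \ref{thm:100} applied to the strict morphism $(\rho, \opn{id}_N)$ produces a commuting square with horizontal arrows $\th_{P,N}$ and $\th_{M,N}$ and right-hand vertical arrow $\opn{H}^{i_0+j_0}(\eta^{\mrm{L}}_{M,N})$. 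Since $\rho$ is a quasi-isomorphism, $\opn{H}^{i_0}(\rho)$ is bijective; the square then shows simultaneously that $\opn{H}^{i_0+j_0}(\eta^{\mrm{L}}_{M,N})$ is an isomorphism and that $\opn{H}^{i_0+j_0}(\eta^{\mrm{L}}_{M,N}) \circ \th^{\mrm{der}}_{M,N} = \th_{M,N}$, which is exactly (ii). Uniqueness follows: on strict pairs (ii) pins down $\th^{\mrm{der}}_{M,N}$ because $\opn{H}^{i_0+j_0}(\eta^{\mrm{L}}_{M,N})$ is invertible there, and every $M \in \cat{D}^{\leq i_0}(A^{\mrm{op}})$ is isomorphic in $\cat{D}(A^{\mrm{op}})$ to the strict object $\tau^{\leq i_0}(M)$ (likewise for $N$), so clause (i) propagates the value from strict pairs to all pairs.

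The only real obstacle I anticipate is the bookkeeping in the second paragraph: checking that the transported isomorphism is genuinely independent of the resolutions and natural in $\cat{D}(A^{\mrm{op}}) \times \cat{D}(A)$, where morphisms are represented by roofs. The remaining ingredients are direct appeals to Theorem \ref{thm:100}, to the existence of K-flat resolutions concentrated in degrees $\leq i_0$ over a nonpositive DG ring, and to smart truncation.
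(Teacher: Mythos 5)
Your proposal follows essentially the same route as the paper's proof: replace $M$ by a semi-free (hence K-flat) resolution $P$ concentrated in degrees $\leq i_0$, reduce $N$ by smart truncation, define $\th^{\mrm{der}}_{M,N}$ by transporting $\th_{P,N}$ from Theorem \ref{thm:100} along $\eta^{\mrm{L}}_{P,N}$ and the induced quasi-isomorphisms, and note that properties (i) and (ii) leave no other choice, which gives uniqueness. Your additional observation that $\opn{H}^{i_0+j_0}(\eta^{\mrm{L}}_{M,N})$ is invertible for every strict pair (not only for K-flat $M$) is correct and sharpens the uniqueness step slightly, but the argument is the same.
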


\begin{proof}
As in the previous proof, we can assume that $i_0 = j_0 = 0$.
By smart truncation of $N$ we can also assume that
$N \in \cat{C}^{\leq 0}_{}(A)$.

Let $\rho : P \to M$ be a semi-free resolution in
$\cat{C}_{\mrm{str}}(A^{\mrm{op}})$
such that $\opn{sup}(P) = \opn{sup}(\opn{H}(M))$.
This exists by \cite[Corollary 11.4.27]{Ye2}.
The complex $P$ belongs to $\cat{C}^{\leq 0}_{}(A^{\mrm{op}})$.

The morphism
$\eta^{\mrm{L}}_{P, N} : P \ot^{\mrm{L}}_A N \to P \ot_A N$
in $\cat{D}(A)$ is an isomorphism.
To satisfy condition (ii) we have no choice but to define
$\th^{\mrm{der}}_{P, N}$ to be the unique isomorphism such that
$\opn{H}^0(\eta^{\mrm{L}}_{P, N}) \circ \th^{\mrm{der}}_{P, N} =
\th_{P, N}$.

Next, in order to satisfy condition (i), we must define
$\th^{\mrm{der}}_{M, N}$ to be the unique isomorphism making this diagram
in $\cat{M}(\K)$ commutative:
\[ \begin{tikzcd} [column sep = 14ex, row sep = 6ex]
\opn{H}^{0}(P) \ot_{\bar{A}} \opn{H}^{0}(N)
\ar[d, "{\th^{\mrm{der}}_{P, N}}"', "{\simeq}"]
\ar[r, "{\opn{H}^0(\rho) \msp{3} \ot_{\bar{A}} \msp{3}
\opn{H}^0(\opn{id}_N)}", "{\simeq}"']
&
\opn{H}^{0}(M) \ot_{\bar{A}} \opn{H}^{0}(N)
\ar[d, "{\th^{\mrm{der}}_{M, N}}", "{\simeq}"']
\\
\opn{H}^{0}(P  \ot^{\mrm{L}}_A N)
\ar[r, "{\opn{H}^0(\opn{Q}(\rho) \msp{3} \ot^{\mrm{L}}_A \msp{3} \opn{id}_N)}",
"{\simeq}"']
&
\opn{H}^{0}(M  \ot^{\mrm{L}}_A N)
\end{tikzcd} \]

It is routine to verify that conditions (i) and (ii) hold.
\end{proof}



\begin{thebibliography}{EGA IV}

\bibitem[Ye1]{Ye1} A.\ Yekutieli,
Dualizing complexes, Morita equivalence and the derived
Picard group of a ring, with appendix by E. Kreines,
{\em J. London Math.\ Soc.} {\bf 60} (1999), 723–746.
DOI \url{https://doi.org/10.1112/S0024610799008108}.

\bibitem[Ye2]{Ye2} A. Yekutieli,
``Derived Categories'', Cambridge University Press, 2019, 
DOI \url{https://doi.org/10.1017/9781108292825}.
Free prepublication version
\url{https://arxiv.org/abs/1610.09640v4}.

\end{thebibliography}
\end{document}